\newtheorem{theorem}{Theorem}[section]
\newtheorem{lemma}[theorem]{Lemma}
\theoremstyle{definition}
\newtheorem{definition}{Definition}
\theoremstyle{remark}
\def\bbordermatrix#1{\begingroup \m@th
  \@tempdima 4.75\p@
  \setbox\z@\vbox{%
    \def\cr{\crcr\noalign{\kern2\p@\global\let\cr\endline}}%
    \ialign{$##$\hfil\kern2\p@\kern\@tempdima&\thinspace\hfil$##$\hfil
      &&\quad\hfil$##$\hfil\crcr
      \omit\strut\hfil\crcr\noalign{\kern-\baselineskip}%
      #1\crcr\omit\strut\cr}}%
  \setbox\tw@\vbox{\unvcopy\z@\global\setbox\@ne\lastbox}%
  \setbox\tw@\hbox{\unhbox\@ne\unskip\global\setbox\@ne\lastbox}%
  \setbox\tw@\hbox{$\kern\wd\@ne\kern-\@tempdima\left[\kern-\wd\@ne
    \global\setbox\@ne\vbox{\box\@ne\kern2\p@}%
    \vcenter{\kern-\ht\@ne\unvbox\z@\kern-\baselineskip}\,\right]$}%
  \null\;\vbox{\kern\ht\@ne\box\tw@}\endgroup}
\newfont{\valami}{ptmr8r scaled 1200} 
\newfont{\kisvalami}{ptmr8r scaled 1000}
\begin{document}

\title{$k$-Sum Decomposition of Strongly Unimodular Matrices}

\author{K. Papalamprou and  L. Pitsoulis\thanks{work of this author was conducted at National Research University Higher School of Economics and supported by RSF grant 14-41-00039} \\
                Department of Electrical and Computer Engineering \\ 
                     Aristotle University of Thessaloniki, Greece \\
                   \texttt{papalamprou@auth.gr, pitsouli@auth.gr} }

\maketitle

\begin{abstract}
Networks are frequently studied algebraically through matrices. In this work, we show that networks may be studied in a more abstract level using results from the theory of matroids by establishing connections to networks by  decomposition results of matroids. First, we present the implications
of the decomposition of regular matroids to networks and related classes of matrices, and secondly  we show that  strongly unimodular matrices are  closed under $k$-sums for $k=1,2$ implying a decomposition into highly connected network-representing blocks,  which are also shown to have a special structure.    
\end{abstract}


\section{Introduction}
It is widely accepted that networks play an important role in many aspects of today's life. To name a few, social networks play an important role in relationships, job hunting, and marketing, while economic networks usually determine the sustainability and development of various organisations.
Moreover,  the understanding of complex biological networks may be the key in answering important questions in the areas of medicine and biology. For many other applications  as well as for an extended overview of the approaches related to complex networks, the interested reader is referred to~\cite{EasKle:10, Jack:10,New:2010}

Networks are naturally modelled as graphs and results from graph theory have been employed to explore and attack problems in networks (see e.g.~\cite{Steen:10}). Graphs are known to be represented algebraically via matrices and it is such a representation that has been extensively used in various problems concerning networks. In this work, we examine special classes of matrices that are related to networks and furthermore have important implications in optimisation. Our primary result shows that these matrices and, therefore the related networks, are decomposed into highly connected blocks which represent networks with specific properties. To do so we employ results from matroid decomposition theory and, to the best of our knowledge, this is among the few works using such tools to study complex networks. The purpose of this work is twofold; at the one hand we would like to relate complex networks with optimisation problems via well-known classes of matrices and on the other hand to present decomposition results for such classes of matrices and discuss their implication to networks. From our viewpoint, the main implication is the possibility of finding a way to study complex networks via exploring the properties of the building blocks that arise from specific decompositions.

The organization of the paper is as follows. 
In Section~\ref{sec_pre}, we provide the relevant theory and some preliminary results regarding matrices and  matroids in order to 
make this work more self-contained.
In Section~\ref{sec_ksm}, we focus on strongly unimodular matrices and show that they are closed under the $k$-sum operations $(k=1,2)$ and, based on that, how these matrices can be decomposed into smaller strongly unimodular matrices. The special structure of these smaller matrices is discussed in 
Section~\ref{sec_3c}. In the last section, the final decomposition result is provided along with the description of the associated highly connected building blocks.

\section{Special Matrices and Matroids} \label{sec_pre}
\subsection{Network and Unimodular Matrices}
We assume that the reader is familiar with the basic notions of graph theory as they are presented in~\cite{Diestel:05}.
Totally unimodular (TU) matrices form an important class of matrices for integer and linear programming due to the integrality properties of 
the associated polyhedron. A matrix $A$ is \emph{totally unimodular} if each square submatrix of $A$ has determinant $0,+1,$ or $-1$. The class of TU matrices has been studied extensively and combinatorial characterisations for these matrices can be found  in~\cite{NemWols:1988,Schrijver:86}. An important subclass of TU matrices is defined as follows.
A matrix $A$ is \emph{strongly unimodular} (SU) if: (i) $A$ is TU, and (ii) every matrix obtained from $A$ setting a $\pm{1}$ entry to $0$ is also TU. Another well-known characterisation for SU matrices goes as follows:
a matrix is strongly unimodular if any of its nonsingular submatrices is triangular,  where a triangular matrix is a square matrix whose entries below or above the main diagonal can become zero by permutation of rows or column. 
Strongly unimodular matrices have appeared several times in the literature \cite{Cora:87,CraLoPo:92,LoPo:89} since they were first introduced in \cite{CraHaIb:86}. Another  subclass of TU matrices discussed in this paper is the class of network matrices. A \emph{network matrix} may be viewed as an edge-path matrix of a directed graph with respect to a particular spanning tree of the graph; results regarding network matrices can be found in~\cite{NemWols:1988,Schrijver:86}. Seymour has shown in~\cite{Seymour:1980} that network matrices and their transposes are the main building blocks for TU matrices.  Moreover, in~\cite{PiPa:09}, it has been shown that the building blocks of TU matrices are matrices associated with bidirected graphs. In this paper we focus on SU matrices which stand between the classes of network and TU matrices and show the network structure of that class.

\subsection{Matroid Theory}
The main reference for matroid theory is the book of Oxley~\cite{Oxley:06}. 
\begin{definition} \label{def_ntef}
A matroid $M$ is an ordered pair $(E,\mathcal{I})$ of a finite set $E$  and a collection $\mathcal{I}$ of subsets of $E$ satisfying the following three conditions:
\begin{itemize}
\item [(I1)] $\emptyset \in{\mathcal{I}}$
\item[(I2)] If $X\in{\mathcal{I}}$ and $Y\subseteq{X}$ then $Y\in{\mathcal{I}}$ 
\item[(I3)] If $U$ and $V$ are members of $\mathcal{I}$ with $|U|<|V|$ then there exists $x\in{V-U}$ such that $U\cup{x}\in{\mathcal{I}}$. 
\end{itemize}
\end{definition}    
Given a matroid $M=(E,\mathcal{I})$, the set $E$ is called the \emph{ground set} of $M$ and the members of $\mathcal{I}$ are the \emph{independent sets} of $M$. Furthermore, any subset of $E$ not in $\mathcal{I}$ is called a $\emph{dependent set}$ of $M$ while a minimal dependent set is called a \emph{circuit} of $M$.

Let $E$ be a finite set of vectors from a vector space over a field $\mathbb{F}$ and let $\mathcal{I}$ be the collection of linearly independent subsets of $E$; then it can be proved that $M=(E,\mathcal{I})$ is a matroid called \emph{vector matroid} denoted by $M[A]$ where $A$ is a matrix  whose columns are the vectors of the ground set. It can be easily shown that there is one-to-one correspondence between the linearly independent columns of $A$ and the independent sets of $M$, so the matroid $M$ can be fully characterised by matrix $A$. Matrix $A$ is called a \emph{representation matrix} of $M$ and we also say that $M$ is $\mathbb{F}$-representable where $\mathbb{F}$ is the field that the elements of matrix $A$ belong. Suppose now that we delete from $A$ all the linearly dependent rows and from the matrix $A'$ so-obtained we choose a basis $B$. Clearly, linear $\mathbb{F}$-independence of columns is not affected by such a deletion of rows. By pivoting on non-zero elements of $B$ we can transform $A'$ to  matrix $[I\;B']$. Pivoting does not affect linear $\mathbb{F}$-independence of a matrix and, thus, $M=M[I\;B']$. The matrix $B'$  is called a \emph{compact representation matrix} of $M$. Two matrices are \emph{projectively equivalent} if one can be obtained from the other by elementary row operations and nonzero column scaling. A matroid $M$ is called \emph{uniquely representable} over some field $\mathbb{F}$ if and only if any two representation matrices of $M$ (over $\mathbb{F}$) are projectively equivalen. A matroid representable over every field is \emph{regular}. Furthermore, there is a clear connection between regular matroids and TU matrices. Specifically, any TU matrix is the representation matrix of some regular matroid and any regular matroid has a TU representation matrix (in $\mathbb{R}$).

Let $G$ be an ordinary graph and let $\mathcal{I}$ be the collection of edge sets inducing a acyclic subgraph of $G$. Then it can be shown that the pair $(E(G), \mathcal{I})$ is a matroid called the \emph{graphic matroid} of $G$  and is denoted by $M(G)$. If $A$ is the incidence matrix of an orientation of $G$ (i.e. the directed graph obtained from $G$ by assigning a direction to each edge)  then it can be shown that $M(G)$ is isomorphic to $M[A]$ and we write $M(G)\cong{M[A]}$. Thus, for any network matrix $N$ with respect to some 
spanning tree of $G$ we have that $M(G)\cong{M(N)}$, since the way we obtain  $N$ from $A$ is also the way we can obtain from $A$ a compact representation matrix of $M[A]$.
The ordered pair $(E,\{E-{S}:S \notin{\mathcal{I}}\})$ is a matroid called the \emph{dual matroid} of $M$ and is denoted by $M^{*}$. It is clear that $(M^{*})^{*}=M$. The prefix 'co' is used to dualize a term; therefore, a matroid is called cographic if it is the dual of a graphic matroid. We should note that not all matroids are closed under duality; for example  regular matroids are closed while graphic matroids are not. 

Any matroid which can be obtained from $M$ by a series of operations called {\em deletions} and {\em contractions} is called a \emph{minor} of $M$ (see e.g. Section~3.1 in~\cite{Oxley:06}). The \emph{rank} of  a matroid $M$, denoted by $r(M)$, equals the cardinality of the maximal independent set of $M$. For some positive integer $k$, a partition $(X,Y)$ of $E(M)$ is called a \emph{$k$-separation} of $M$ if the following two conditions are satisfied: (i) $\min\{|X|,|Y|\}\geq k$, and (ii) $r_{M}(X)+r_{M}(Y)-r(M) \leq k-1$.
Finally, we say that $M$ is \emph{$k$-connected} when it does not have an $l$-separation for $1\leq l \leq k-1$. 



\section{A $k$-sum Decomposition of Strongly Unimodular Matrices} \label{sec_ksm}

The following two results (Lemmas~\ref{lem_ew} and~\ref{lem_su22}) can be obtained easily from the definition of SU matrices and the fact that TU matrices are closed under deletions of rows and columns~\cite{NemWols:1988}. The proof of Lemma~\ref{lem_ew} is straightforward and is ommited.
\begin{lemma} \label{lem_ew}
Every submatrix of a strongly unimodular matrix is strongly unimodular.
\end{lemma}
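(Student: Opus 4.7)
The plan is to unpack the definition of strongly unimodular (SU) and reduce both clauses to the known closure of totally unimodular (TU) matrices under taking submatrices (i.e., under deletion of rows and columns).

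Let $A$ be SU and let $B$ be a submatrix of $A$, obtained by deleting some set $R$ of rows and some set $C$ of columns. I would verify the two requirements of the SU definition for $B$ in turn.

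First, $B$ is TU: this is immediate because $A$ is TU and TU matrices are closed under row/column deletion, which is the step the lemma leans on. Second, I must show that setting any $\pm 1$ entry of $B$ to $0$ produces a TU matrix. Pick a $\pm 1$ entry of $B$ at position $(i,j)$; it corresponds to a $\pm 1$ entry of $A$ at the same position (in the original labelling). Let $A'$ be the matrix obtained from $A$ by zeroing out that entry, and let $B'$ be the matrix obtained from $B$ by zeroing it out. By the SU property of $A$, the matrix $A'$ is TU. But $B'$ is precisely the submatrix of $A'$ obtained by deleting the rows in $R$ and the columns in $C$, so again by closure of TU under submatrices, $B'$ is TU. Combining the two clauses shows $B$ is SU.

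The only possible obstacle is purely bookkeeping — keeping straight that ``setting an entry of the submatrix to $0$'' corresponds to ``setting the same entry of the ambient matrix to $0$'' — and this is immediate because zeroing commutes with deleting rows and columns. No deeper argument is required, which is why the authors declare the proof straightforward and omit it.
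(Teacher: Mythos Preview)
Your proof is correct and is exactly the straightforward argument the paper alludes to when it omits the proof: verify clause~(i) of the SU definition via closure of TU under submatrices, and clause~(ii) by observing that zeroing an entry of $B$ and then viewing the result as a submatrix of the corresponding $A'$ again reduces to TU closure under submatrices. There is nothing to add.
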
 

\begin{lemma} \label{lem_su22}
A TU matrix having at most two non-zeros in every column (row) is SU.
\end{lemma}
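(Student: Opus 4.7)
The plan is to unpack the definition of SU directly: I must show that if $A$ is a $\{0,\pm1\}$ TU matrix with at most two nonzeros in every column, and $A'$ is obtained from $A$ by zeroing out some $\pm 1$ entry $a_{ij}$, then $A'$ is still TU (the row case is symmetric, or follows by considering $A^\top$ since TU is preserved under transposition). So I fix an arbitrary square submatrix $B$ of $A'$ and aim to show $\det B \in \{0,\pm 1\}$.

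The first observation is the trivial case: if $B$ does not use both row $i$ and column $j$, then $B$ coincides with a submatrix of $A$ and the bound on $\det B$ follows from $A$ being TU. Otherwise, let $B'$ be the submatrix of $A$ on the same rows and columns as $B$; they agree everywhere except at the position corresponding to $(i,j)$, where $B'$ has $a_{ij} = \pm 1$ and $B$ has $0$. The key step is to expand $\det B$ and $\det B'$ along the column of $B$ (and $B'$) corresponding to column $j$ of $A$. This column has at most two nonzero entries in $B'$, because column $j$ of $A$ does.

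I split into two subcases. If $a_{ij}$ is the only nonzero in that column of $B'$, then the corresponding column of $B$ is entirely zero, so $\det B = 0$. If instead there is a second nonzero $a_{i'j} = \pm 1$ in that column (with $i' \ne i$), then the Laplace expansion gives
\[
\det B' = \pm a_{ij}\,\det M_i \pm a_{i'j}\,\det M_{i'}, \qquad \det B = \pm a_{i'j}\,\det M_{i'},
\]
where $M_i$ and $M_{i'}$ are the minors of $B'$ obtained by deleting the relevant row and column $j$. Since $M_{i'}$ is a square submatrix of $A$ and $A$ is TU, $\det M_{i'} \in \{0,\pm 1\}$, and so $\det B = \pm a_{i'j}\,\det M_{i'} \in \{0,\pm 1\}$.

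There is no real obstacle here; the only thing to be careful about is the bookkeeping of the two subcases (one versus two nonzeros remaining in the relevant column of $B'$) and noting that the row version follows by the same argument applied to $A^\top$, since the transpose of a TU matrix with at most two nonzeros per row has at most two nonzeros per column and is still TU.
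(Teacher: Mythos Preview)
Your proof is correct and follows essentially the same approach as the paper: expand the determinant of any square submatrix of $A'$ along the column containing the altered entry, noting that this column has at most one surviving nonzero and hence the determinant is, up to sign, a subdeterminant of $A$. Your version is more explicit in separating the zero-column subcase from the single-nonzero subcase, but the underlying argument is identical.
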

\begin{proof}
Let $A$ be a TU matrix with at most two non-zeros in every column. The case in
which $A$ has two non-zeros in every row can be handled in much the same way.  Let us set
a nonzero of column $i$ of $A$ to $0$ and call $A'$ the matrix so-obtained. Now every submatrix of $A'$ either is equal to the corresponding
submatrix of $A$; or we can expand the determinant of the submatrix of $A'$
along column $i$ (which has at most one nonzero being $\pm{1}$) and observe that the
determinant of $A'$ is actually equal, up to $\pm{1}$ scaling, to
the determinant of a submatrix of $A$.
\end{proof}

\noindent
As shown in the following result, SU matrices are closed  under fundamental matrix operations.  
\begin{lemma}  \label{lem_opra}
SU matrices are closed under the following operations:
\begin{itemize}
\item [(i)] transposing,
\item[(ii)] adding a zero row or column,
\item [(iii)] adding a unit column or a unit row, and
\item [(iv)] repeating a column or a row
\end{itemize}
\end{lemma}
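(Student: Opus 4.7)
The plan is to verify, for each operation, the two defining properties of SU: the resulting matrix $A'$ is TU, and every matrix obtained from $A'$ by zeroing one $\pm 1$ entry is still TU. In all four cases the TU-part is standard (and in several cases it is already well known that TU is preserved by these operations), so the bulk of the work is to argue about what happens when a $\pm 1$ entry of $A'$ is zeroed.

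For (i), transposition, I would simply observe that both TU and ``TU after zeroing a $\pm 1$ entry'' are properties stable under transposition, since $\det(B^T)=\det(B)$ and zeroing the $(i,j)$-entry of $A$ corresponds to zeroing the $(j,i)$-entry of $A^T$. For (ii), adding a zero row/column, note that any square submatrix of $A'$ either lies inside $A$ or has a zero row/column, so TU is immediate; zeroing a $\pm 1$ entry only affects the $A$-part, reducing to $A$ being SU. For (iii), adding a unit column $e$ (the row case follows by (i)), a square submatrix of $A'$ either avoids $e$, in which case it is a submatrix of $A$, or meets $e$; in the latter case expansion along $e$ (which has at most one nonzero, a $\pm 1$) gives $\det = \pm\det(B')$ for some submatrix $B'$ of $A$. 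Zeroing the lone $\pm 1$ of $e$ replaces it by a zero column (reducing to (ii)), while zeroing an entry in the $A$-block reduces to expanding along the unit column of a matrix obtained from an SU-modification of $A$.

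The main obstacle is (iv), column (or row) duplication, because the ``zero a $\pm 1$'' operation can be applied to one of the two copies of the duplicated column and then the two copies are no longer identical, so the usual $\det=0$ argument for duplicated columns breaks down. I would write $A'=[c_1\mid\cdots\mid c_n\mid c_j]$ where $c_j$ is the duplicated column, and consider zeroing the entry in row $i$ of the original copy, producing columns $c_j'$ (differing from $c_j$ only at row $i$) and $c_j$ in the new matrix $A''$. For a square submatrix $B$ of $A''$, I would split into the cases: $B$ misses one of the two copies (submatrix of $A$ or of an SU-modification of $A$, hence $\det\in\{0,\pm1\}$); $B$ contains both copies but not row $i$ (the two copies restricted to $B$ coincide, so $\det = 0$); $B$ contains both copies and row $i$. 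In the last case I would apply the column operation that replaces the $c_j$-column of $B$ by $(c_j-c_j')|_B$, which is the $\pm 1$ vector supported on row $i$; this preserves the determinant, and expanding along this column yields $\det(B)=\pm\det(B')$ where $B'$ is obtained by deleting row $i$ and the modified column. Since $c_j$ and $c_j'$ agree off row $i$, the remaining column in $B'$ equals $c_j|_{B\setminus i}$, so $B'$ is a genuine submatrix of $A$ and hence has determinant in $\{0,\pm 1\}$. This shows $A''$ is TU, so $A'$ is SU, finishing (iv); the row-duplication case then follows from (i).
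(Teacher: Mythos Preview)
Your proof is correct and follows essentially the same approach as the paper: for (i)--(iii) the arguments are virtually identical, and for (iv) both proofs isolate the hard case where a nonzero in one of the two copies is zeroed and reduce the determinant of any submatrix containing both copies to (up to sign) a subdeterminant of $A$. The only cosmetic difference is that the paper compares the Laplace expansion of such a submatrix with that of the corresponding submatrix of $[A\;a_1\;a_1]$ (whose determinant vanishes) to see they differ by a TU minor, whereas you perform the column operation $c_j\mapsto c_j-c_j'$ to create a unit column and then expand---this is the same computation.
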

\begin{proof}
Part (i) is trivial since the determinant of any submatrix remains unchanged under transposing. For (ii), let $A'$ be the matrix obtained from the addition of a zero row or column to a matrix $A$.Clearly, the replacement of any nonzero of $A'$ by a zero has to take place to the submatrix of $A'$ which is equal to $A$. But $A$ is SU and therefore we have that the matrix so-obtained is a TU matrix plus a zero column (row). The result now follows from the fact that TU matrices are closed under the addition of a zero row or column~\cite{Schrijver:86}.  

For (iii), let's add a unit column $a$ to an SU matrix $A$ and let's call $A'=[A\;a]$ the matrix so-obtained. The case in which a unit row is added can be handled similarly. If we change the nonzero of column $a$ to zero then this is equivalent of adding a zero row to a TU matrix and therefore the matrix so-obtained remains TU. If we change any other nonzero of $A'$ to zero then this has to be an element of the part $A$ of $A'$; let us change such a nonzero to zero and call $A''=[B\;a]$ the new matrix. We shall show that any submatrix of $A''$ is TU. Obviously, any submatrix of $B$ is TU because $A$ is an SU matrix. In the remaining case, we can expand the determinant of a submatrix along column $a$ and observe that this determinant is a $\pm{1}$ multiple of the determinant of a submatrix of $B$.       

For (iv), let $A'=[A\; a_1]$ be an SU matrix and let $a_1$ be a column of $A'$
which we repeat in order to construct the matrix $[A \;a_1 \;a_1]$. We note
here that the case of repeating a row can be handled in the same way. The only
case which has to be examined is the one in which a nonzero element of a
column $a_1$ becomes zero, since for all the other cases all the submatrices
of the matrix obtained are easily checked to be TU. Let $a_1'$ be the matrix
obtained from turning a nonzero of a column $a_1$ to zero, then the only
submatrices of $A''=[A \; a_1' \; a_1]$ which has to be examined of being TU are
those containing parts of column $a_1'$ and $a_1$, since all the other
submatrices are trivially TU. After expanding now the determinant of such a
submatrix of  $A''$ along the column $a_1'$ and also expanding the determinant of
the same submatrix of $[A \; a_1 \; a_1]$ along $a_1$ we see that these two
determinants  differ by a  determinant of a TU matrix. Thus, these
determinants differ by $0$ or $\pm{1}$. But the determinant of the submatrix
of  $[A \; a_1 \; a_1]$ is equal to zero and therefore we have that the
determinant of the corresponding submatrix of $A''$  is either $0$ or $\pm{1}$.            
\end{proof}

In what follows the operations of $k$-sum $(k=1,2,3)$ are of central importance.  
\begin{definition}\label{def_k-sums}
If $A, B$ are matrices, $a,d$ are column vectors and $b,c$ are row vectors of
appropriate size in $\mathbb R$ then we define the following matrix operations
\begin{description}
\item[1-sum:] $A\oplus_1 B:=\begin{bmatrix}A & 0 \\ 0 & B\end{bmatrix}$
\item[2-sum:] $\begin{bmatrix}A & a\end{bmatrix}\oplus_2 \begin{bmatrix}b\\B\end{bmatrix}:=\begin{bmatrix}A & ab\\0& 
B\end{bmatrix}$
\item[3-sum:] $\begin{bmatrix}A & a & a\\c & 0 & 1\end{bmatrix}\oplus_3 \begin{bmatrix}1 & 0 & b\\d & d & B\end{bmatrix}:=
\begin{bmatrix}A & ab\\dc& B\end{bmatrix}$ or \\
\hspace*{.13in}$\begin{bmatrix}A & 0 \\b & 1 \\ c & 1 \end{bmatrix}\oplus^3 \begin{bmatrix}1 & 1 & 0\\
 a& d & B\end{bmatrix}:=\begin{bmatrix}A & 0 \\ D & B\end{bmatrix}$ \\
where, in the $\oplus^3$, $b$ and $c$ are $\mathbb{R}$-independent row vectors and $a$ and $d$ are $\mathbb{R}$-independent column vectors
such that $[\frac{b}{c}]=[D_1|\bar{D}]$, $[a|d]=[\frac{\bar{D}}{D_2}]$ and $\bar{D}$ is a square non-singular matrix. 
Then, $D=[a|d]\bar{D}^{-1}[\frac{b}{c}]$.
\end{description}
\end{definition}

Network matrices and TU matrices are known to be closed under these operations (see \cite{PiPa:09} and \cite{Schrijver:86}, respectively). These operations have been originally defined in the more general framework of regular matroids in~\cite{Seymour:1980} and here we present the special form of these operations as they applied to the compact representation matrices in $\mathbb{R}$, i.e. TU matrices.

In the lemmas that follow we show that SU matrices are  closed under the $1$-sum and $2$-sum operations.
\begin{lemma} \label{lem_1-s}
If $A$ and $B$ are SU matrices then the matrix
$N=A\oplus_{1}B=\begin{bmatrix}A & 0\\0& B\end{bmatrix}$ is an SU matrix.
\end{lemma}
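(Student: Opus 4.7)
My plan is to reduce both requirements in the definition of SU (namely, total unimodularity and preservation of TU under single zeroings) to the standard closure of TU matrices under $1$-sums. Since $A$ and $B$ are in particular TU, the block diagonal $N$ is TU by the classical result that any square submatrix of $N$ either contains a zero row/column (coming from one of the off-diagonal zero blocks not being fully covered) and thus has determinant $0$, or, after a permutation of rows and columns, decomposes as $\det M_A \cdot \det M_B$ where $M_A, M_B$ are square submatrices of $A$ and $B$ respectively. In either case the determinant lies in $\{0, \pm 1\}$.

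Next I would check condition (ii) of the SU definition. Pick any $\pm 1$ entry of $N$ and set it to $0$, obtaining $N'$. This entry sits either in the upper-left block $A$ or the lower-right block $B$; by symmetry assume the former. Then $N' = A' \oplus_1 B$, where $A'$ is obtained from $A$ by turning a $\pm 1$ entry into $0$. Because $A$ is SU, $A'$ is TU, and $B$ is TU by hypothesis. Applying the same $1$-sum argument as in the previous paragraph (now to $A'$ and $B$ instead of $A$ and $B$) shows that $N'$ is TU.

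Since an arbitrary $\pm 1$ entry of $N$ has been zeroed and the resulting matrix is TU, both conditions in the definition of SU hold for $N$, so $N$ is SU. There is really no obstacle here: the argument is just the observation that the $1$-sum construction is ``compatible'' with the definition of SU because zeroing an entry in $N$ acts entirely within one of the two blocks, and TU matrices are closed under $1$-sums.
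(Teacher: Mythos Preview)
Your proof is correct and follows essentially the same route as the paper: use closure of TU matrices under $1$-sums to get that $N$ is TU, then observe that zeroing a nonzero entry of $N$ acts within a single block, yielding $A'\oplus_1 B$ (or $A\oplus_1 B'$) with $A'$ (resp.\ $B'$) TU by the SU hypothesis, and apply closure under $1$-sums again. The only difference is that you spell out the determinant reason behind closure under $1$-sums, which the paper simply cites as a known fact.
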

\begin{proof}
Since $A$ and $B$ are TU and from the fact that TU matrices are closed under $1$-sums we have that $N$ is TU. It remains to be shown that if we change a nonzero of the submatrix $A$ (or $B$) of $N$ to zero then the matrix $N'=\begin{bmatrix}A' & 0\\0& B'\end{bmatrix}$ obtained by this change is TU. Since $A$ and $B$ are SU we have that $A'$ and $B'$ are TU and from the fact that TU matrices are closed under the $1$-sum operation we have that $N'$ is TU as well. 
\end{proof}

\begin{lemma} \label{lem_2-s}
If $A=\begin{bmatrix}A' & a\end{bmatrix}$ and
$B=\begin{bmatrix}b\\B'\end{bmatrix}$ are SU matrices then the matrix
$N=A\oplus_2 B=\begin{bmatrix}A' & ab\\0& 
B'\end{bmatrix}$ is an SU matrix.
\end{lemma}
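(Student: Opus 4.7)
The plan is to show $N$ is TU by invoking the classical closure of TU matrices under $2$-sums, and then verify that zeroing any single nonzero of $N$ still yields a TU matrix. I would split the latter into three cases based on the location of the zeroed entry.

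If the zeroed entry lies in the $A'$ block, the perturbed matrix is a $2$-sum of the same shape in which $A$ has been replaced by the matrix obtained from it by setting that entry to zero; the SU hypothesis on $A$ ensures this modified first factor is TU, so the $2$-sum is TU. The $B'$-block case is symmetric. The substantive case is when the entry sits at a position $(i_{0},j_{0})$ of the outer-product block $ab$, giving $\lambda:=a_{i_{0}}b_{j_{0}}=\pm 1$; here the perturbation $-\lambda e_{i_{0}}e_{j_{0}}^{T}$ destroys the rank-one structure of $ab$, and the perturbed matrix $N''$ is not itself a $2$-sum of TU matrices.

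For that case I would take an arbitrary square submatrix $M$ of $N''$, partition its rows as $R_{1}\cup R_{2}$ (top and bottom) and columns as $C_{1}\cup C_{2}$ (left and right), and reduce to the case $i_{0}\in R_{1}$, $j_{0}\in C_{2}$ (otherwise $M$ is a submatrix of $N$ and $\det M\in\{0,\pm 1\}$ by TU of $N$). Multilinearity in row $i_{0}$ gives $\det M=\det M^{(0)}-\lambda\sigma\Delta$, where $M^{(0)}$ is the corresponding submatrix of $N$, $\Delta$ is the determinant of the minor of $M^{(0)}$ with row $i_{0}$ and column $j_{0}$ removed, and $\sigma=\pm 1$ is the cofactor sign. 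The top-right block of $M^{(0)}$ equals the rank-one outer product $\alpha\beta^{T}$, with $\alpha:=a|_{R_{1}}$ and $\beta:=b|_{C_{2}}$, so a Laplace expansion along the top rows collapses the sum to zero unless $|R_{1}|-|C_{1}|\in\{0,1\}$; a parallel analysis of $\Delta$ settles every configuration except $|R_{1}|=|C_{1}|+1$.

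In that remaining configuration one obtains $\det M=c_{1}\gamma\delta-\lambda\sigma\epsilon\zeta$, where $\gamma=\det[M_{11}\,|\,\alpha]$, $\delta=\det\bigl[\begin{smallmatrix}\beta^{T}\\M_{22}\end{smallmatrix}\bigr]$, $\epsilon$ and $\zeta$ are the square minors of $M_{11}$ and $M_{22}$ obtained by deleting row $i_{0}$ and column $j_{0}$, and $c_{1}=\pm 1$ is a Laplace sign; a priori this expression could reach $\pm 2$. The SU-hypothesis saves the bound: the matrices obtained from $A$ and $B$ by zeroing $a_{i_{0}}$ and $b_{j_{0}}$ are TU, so the auxiliary determinants $\gamma^{(i_{0})}$ and $\delta^{(j_{0})}$---defined by zeroing the $i_{0}$-th entry of $\alpha$ (respectively the $j_{0}$-th entry of $\beta$) before taking the determinant---lie in $\{0,\pm 1\}$, and a cofactor expansion along the last column of $[M_{11}\,|\,\alpha]$ yields $\gamma=\pm a_{i_{0}}\epsilon+\gamma^{(i_{0})}$, with an analogous identity for $\delta$. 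After verifying the Laplace/cofactor sign identity $c_{1}s_{1}s_{2}=\sigma$ that cancels the $\lambda\epsilon\zeta$ contribution, the formula simplifies to $\det M=c_{1}[\gamma\delta^{(j_{0})}+\gamma^{(i_{0})}\delta-\gamma^{(i_{0})}\delta^{(j_{0})}]$, and a short case analysis on whether $\epsilon$ and $\zeta$ vanish---using that $\epsilon\neq 0$ forces $\gamma-\gamma^{(i_{0})}=\pm 1$, so exactly one of $\gamma,\gamma^{(i_{0})}$ is zero, and symmetrically for $\delta,\delta^{(j_{0})},\zeta$---delivers $|\det M|\le 1$ in every subcase. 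The main technical obstacle is the sign identity $c_{1}s_{1}s_{2}=\sigma$; the rest is routine bookkeeping.
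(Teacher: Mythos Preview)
Your outline is correct and the determinantal analysis you sketch does go through (the sign identity $c_{1}s_{1}s_{2}=\sigma$ is indeed a purely combinatorial fact about Laplace expansions, verifiable by tracking positions or by specialising to a matrix with a single nonzero in the relevant row/column). Once that sign is nailed down, the four-way case split on $(\epsilon,\zeta)$ and on which of $\gamma,\gamma^{(i_{0})}$ vanishes is exactly as you describe.

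That said, the paper handles case~(ii) by a very different and much shorter device. Rather than computing $\det M$ directly, it observes that the perturbed matrix $N'$ is itself a \emph{$3$-sum} $\hat{A}\oplus_{3}\hat{B}$ of two explicit matrices, and then shows that $\hat{A}$ and $\hat{B}$ are TU by building them from $A$ and $B$ via the SU-preserving operations of Lemma~\ref{lem_opra} (repeating a column, appending a unit row) followed by a single zeroing step. Closure of TU under $3$-sums then finishes the argument in one line, with no Laplace expansions, no block-rank analysis, and no sign bookkeeping at all.

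What each approach buys: the paper's $3$-sum trick is conceptually cleaner and shows that the rank-two perturbation of the outer-product block is exactly the kind of ``overlap'' that the $3$-sum is designed to absorb; it also makes explicit where Lemma~\ref{lem_opra} enters. Your direct approach is self-contained (it does not rely on the nontrivial fact that TU is closed under $3$-sums) and would generalise to settings where no such composition operation is available, at the cost of the sign identity and the case analysis you flagged.
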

\begin{proof}
Since TU matrices are closed under $2$-sums we have that the matrix $N$, which
is the $2$-sum of the TU matrices $A$ and $B$, is TU. It remains to be shown
that changing a nonzero of $N$ to zero the matrix $N'$ so-obtained is also TU.
We consider the following two cases separately: (i) we replace a nonzero of
the submatrix $A'$ or $B'$ of $N$ by zero, and (ii) we replace a nonzero
element of the $ab$ submatrix of $N$ by zero. 

For case (i) we can assume without loss of generality that we change a nonzero element of $A'$ to zero and let us call $\bar{N}=\begin{bmatrix} \bar{A'} & ab\\0& 
B'\end{bmatrix}$ the matrix so-obtained (the case in which a nonzero element of $B'$ is changed is similar). Therefore, matrix $\bar{N}$ is the $2$-sum of the matrix $\begin{bmatrix} \bar{A'} & a\end{bmatrix}$ and $\begin{bmatrix}b\\B'\end{bmatrix}$, where $\begin{bmatrix} \bar{A'} & a\end{bmatrix}$ is a TU matrix since it is obtained from the SU matrix $A$ by replacement of a nonzero by a zero, and $\begin{bmatrix}b\\B'\end{bmatrix}$ is TU since it is equal to matrix $B$. From the fact that TU matrices are closed under $2$-sums the result follows.

For case (ii), let $N'$ be the matrix obtained from changing a nonzero of the
$ab$ part of $N$ to zero. We shall show that $N'$ is TU. Since SU matrices are closed under row and column permutations, we can assume that 
$N'=
\left[
\begin{array}{cc|c}
 A' &  a_1 &  ab_2  \\
0 &  b_1 &  B_1 
\end{array}
\right]
$, where $a_1$ contains the nonzero having changed and thus differs from
column $a$ only to that element, $b_1$ is the
first column of $B'$ and $B_1$ is the rest of it, i.e. $B'=[b_1 \; B_1]$, and
$B$ has as first row the vector $[1 \;  b_2]$, where the first element is $1$ since we assumed that $a$ is not a zero vector, i.e. 
$B=\left[\begin{array}{cc}
1 & b_2 \\
b_1 & B_1
\end{array}
\right]$. We can easily see that $N'$ is the $3$-sum of the following two matrices 
\[
\hat{A}=
\left[
\begin{array}{cc|cc}
A' & a_1 & a & a \\
0 & 1 & 0 & 1
\end{array}
\right]    
\
\textrm{   and   }
\hat{B}=
\left[
\begin{array}{ccc}
0 & 1 & b_2 \\
b_1 & b_1 & B_1
\end{array}
\right]
\]
Since TU matrices closed under $3$-sums it suffices to show that each of
$\hat{A}$ and $\hat{B}$ is TU. We know that $[A \; a \; a]$ is SU because of
Lemma~\ref{lem_opra}~(iv); moreover, from (iii) of the same Lemma we have that 
$
\left[
\begin{array}{ccc}
A' & a & a \\
0 & 0 & 1
\end{array}
\right]    
$
is SU. Applying again (iv) of Lemma~\ref{lem_opra}, we have that
$\tilde{A}=
\left[
\begin{array}{cc|cc}
A' & a & a & a \\
0 & 1 & 0 & 1
\end{array}
\right]    
$
is SU. Thus, changing a specific nonzero from a column 
$\left[
\begin{array}{r}
a \\
1  
\end{array}
\right] $ of $\tilde{A}$ to zero we obtain
$\hat{A}$ which has to be TU.
For $\hat{B}$ now, by the fact that $B$ is SU and Lemma~\ref{lem_opra}, the matrix $\tilde{B}=
\left[
\begin{array}{rrr}
1 &   1  & b_2 \\
b_1 &  b_1 & B_1
\end{array}
\right]
$ is SU. Thus, replacing a $1$ of a column
$\left[
\begin{array}{r}
1 \\
b  
\end{array}
\right] $ of $\tilde{B}$ we obtain matrix $\hat{B}$ which has to be TU. Since both
$\hat{A}$ and $\hat{B}$ are TU the result follows.
\end{proof}

In what follows we shall make use of the following regular matroid decomposition theorem by Seymour~\cite{Seymour:1980}.   
\begin{theorem} \label{th_Seym}
Every regular matroid $M$ may be constructed by means of $1$-, $2$-, and
$3$-sums starting with matroids each isomorphic to a minor of $M$ and each
either graphic or cographic or isomorphic to $R_{10}$.
\end{theorem}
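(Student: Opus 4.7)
The plan is to proceed by strong induction on $|E(M)|$, using the matroid connectivity of $M$ to peel off smaller regular minors via the $k$-sum operations defined above. The base cases are matroids on few elements, each of which can be checked directly to be graphic, cographic, or isomorphic to $R_{10}$.

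For the inductive step I would split into three cases according to the connectivity of $M$. If $M$ is disconnected, then $M = M_1 \oplus_1 M_2$ for two proper regular minors of $M$, and induction applies immediately. If $M$ is connected but has a non-trivial $2$-separation $(X,Y)$, then $M = M_1 \oplus_2 M_2$; the two pieces are again proper regular minors of $M$, and induction closes this case, provided one checks that regularity of $M_i$ can be read off from a $\{0,\pm1\}$ representation of $M$ by restricting to the appropriate block of the matrix in Definition~\ref{def_k-sums}.

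The hard case is when $M$ is $3$-connected. Here the aim is to prove that if $M$ is not graphic, not cographic, and not isomorphic to $R_{10}$, then $M$ admits a non-trivial $3$-separation inducing a genuine $3$-sum $M = M_1 \oplus_3 M_2$ with both $M_i$ proper regular minors of $M$. My approach would be to combine the excluded-minor characterization of regular matroids (no $F_7$ and no $F_7^*$ minors) with the splitter theorem for $3$-connected matroids to show that any such $M$ must contain $R_{12}$ as a minor, and then to argue that the canonical $3$-separation of $R_{12}$ lifts to a non-trivial $3$-separation of $M$ itself.

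The main obstacle is precisely this lifting step: one must carefully control how the elements of $E(M) \setminus E(R_{12})$ attach to the two sides of the $R_{12}$-separation. The argument requires a delicate analysis of circuits and cocircuits crossing the separation, showing that each outside element either extends one side cleanly or else would force a forbidden minor or violate $3$-connectivity. Once the $3$-separation is in hand, verifying that the two components of the resulting $3$-sum are themselves regular minors of $M$ reduces, as in the $1$- and $2$-sum cases, to a direct computation on the block matrix form of $\oplus_3$, after which the induction hypothesis completes the proof.
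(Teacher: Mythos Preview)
The paper does not give a proof of this theorem. Theorem~\ref{th_Seym} is quoted verbatim as Seymour's regular matroid decomposition theorem and attributed to~\cite{Seymour:1980}; the paper uses it as a black box (together with the splitter theorem, also quoted without proof as Theorem~\ref{th_r100}) to derive the subsequent structural results about SU matrices. There is therefore no ``paper's own proof'' to compare your proposal against.

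For what it is worth, your outline is broadly the shape of Seymour's original argument: reduce to the $3$-connected case via $1$- and $2$-sums, then show that a $3$-connected regular matroid which is neither graphic, cographic, nor $R_{10}$ must contain an $R_{12}$ minor, and finally lift the canonical $3$-separation of $R_{12}$ to a $3$-separation of $M$. Two points deserve sharpening if you ever write this up in full. First, the excluded-minor characterization you want is not that of regular matroids (no $U_{2,4}$, $F_7$, $F_7^{*}$) but rather that of graphic and cographic matroids among regular ones: a $3$-connected regular matroid that is not graphic has an $M^{*}(K_{3,3})$ or $M^{*}(K_5)$ minor, and dually if it is not cographic; combining these and applying the splitter theorem is what forces $R_{10}$ or $R_{12}$. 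Second, the lifting of the $R_{12}$ separation is not a routine circuit/cocircuit bookkeeping exercise; Seymour's proof of this step occupies a substantial portion of~\cite{Seymour:1980} and relies on a careful analysis of how $3$-separations behave under single-element extensions and coextensions in $3$-connected matroids. Your sketch acknowledges this as ``the main obstacle'' but does not indicate a mechanism for resolving it.
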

\noindent
The $R_{10}$ regular matroid is a ten-element matroid, which can be found in~\cite{Oxley:06, Truemper:98}, and it has the following two unique totally unimodular compact representation matrices $B_1$ and $B_2$, up to row and column permutations and scaling of rows and columns by $-1$
\begin{equation}\label{eq_B1}
B_1=
\kbordermatrix{\mbox{}& 1 & 2 & 3 & 4 & 5 \\
1 & {\;\,\! 1}  & {\;\,\! 0}  & {\;\,\! 0}  & {\;\,\! 1}  & {\!\!\! -1}  \\  
2 & {\!\!\! -1} & {\;\,\! 1}  & {\;\,\! 0}  & {\;\,\! 0}  & {\;\,\! 1} \\  
3 & {\;\,\! 1}  & {\!\!\! -1} & {\;\,\! 1}  & {\;\,\! 0}  & {\;\,\! 0} \\  
4 & {\;\,\! 0}  & {\;\,\! 1}  & {\!\!\! -1} & {\;\,\! 1}  & {\;\,\! 0} \\  
5 & {\;\,\! 0}  & {\;\,\! 0}  & {\;\,\! 1}  & {\!\!\! -1} & {\;\,\! 1} 
}
\mspace{40mu}
B_2=\kbordermatrix{\mbox{}& 1 & 2 & 3 & 4 & 5 \\
1 & 1 & 1 & 1 & 1 & 1  \\ 
2 & 1 & 1 & 1 & 0 & 0 \\ 
3 & 1 & 0 & 1 & 1 & 0 \\  
4 & 1 & 0 & 0 & 1 & 1 \\  
5 & 1 & 1 & 0 & 0 & 1 \\ 
}
\end{equation}
A consequence of theorem~Theorem \ref{th_Seym}
is the construction Theorem~\ref{Seymour_matrix} for totally unimodular
matrices which appears in \cite{Seymour:95,Truemper:98}. 
\begin{theorem} \label{Seymour_matrix}
Any TU matrix is up to row and column permutations and scaling by $\pm{1}$ factors a network matrix, the transpose of a network matrix, the matrix $B_1$
 or $B_2$ of (\ref{eq_B1}),or may be constructed recursively by these matrices using  matrix $1$-, $2$- and $3$-sums. 
\end{theorem}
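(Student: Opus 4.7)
The plan is to reduce the statement to Seymour's matroid decomposition (Theorem~\ref{th_Seym}) via the standard bijection between $\{0,\pm 1\}$ totally unimodular matrices and compact representations of regular matroids. Given a TU matrix $A$, one forms the regular matroid $M[I\,|\,A]$; conversely every regular matroid admits a TU compact representation, uniquely determined up to row/column permutations and $\pm 1$ scalings (the very equivalence built into the statement). Under this correspondence, network matrices realize exactly the graphic matroids (the compact representation arising from a spanning tree of a directed graph), transposes of network matrices realize the cographic matroids, and the two matrices $B_1$ and $B_2$ exhaust, up to the allowed operations, the TU compact representations of $R_{10}$ (a standard calculation recorded in \cite{Oxley:06,Truemper:98}).

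Given these base cases I would argue by induction on the size of $A$. If $M[I\,|\,A]$ is graphic, cographic, or isomorphic to $R_{10}$, the identifications above finish the proof. Otherwise Theorem~\ref{th_Seym} provides a matroid $k$-sum decomposition ($k\in\{1,2,3\}$) of $M[I\,|\,A]$ into regular matroids $M_1,M_2$, each isomorphic to a proper minor of $M[I\,|\,A]$. Applying the inductive hypothesis to compact TU representations $A_1,A_2$ of $M_1$ and $M_2$ realizes each $A_i$ recursively from network matrices, their transposes, and $B_1,B_2$. It then remains to check that, after suitable row/column permutations and $\pm 1$ scalings, the matroid $k$-sum is represented by the corresponding matrix $k$-sum of Definition~\ref{def_k-sums}: for $k=1$ this is immediate; for $k=2$ the shared element can be pivoted to a unit column in $A_1$ and a unit row in $A_2$, producing the rank-one outer-product block $ab$ prescribed by $\oplus_2$; for $k=3$ one pivots so that the shared triangle/triad assumes one of the two normalized forms with repeated columns and the required $0$/$1$ entries matching $\oplus_3$ or $\oplus^3$.

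The main obstacle is this last normalization step. While the matroid $k$-sum is intrinsically well-defined up to isomorphism, the matrix $k$-sums in Definition~\ref{def_k-sums} require the summands to appear in a rigid prescribed form, and it is non-trivial to verify that the invariant operations on TU compact representations (pivoting and scaling of rows/columns by $\pm 1$) suffice to bring arbitrary TU representations of $M_1$ and $M_2$ into that form. This is precisely the content of the matrix version of Seymour's theorem worked out in \cite{Seymour:95,Truemper:98}, which I would invoke for the signing and pivoting details. Once the summands are in normalized form, a direct calculation shows that the matrix $k$-sum is TU and represents the matroid $k$-sum of $M_1$ and $M_2$, closing the induction.
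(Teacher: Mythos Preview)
Your sketch is correct and follows the standard derivation of the matrix version from the matroid version of Seymour's theorem; this is precisely the approach in the references \cite{Seymour:95,Truemper:98} that the paper cites. Note that the paper itself does not supply a proof of this theorem at all---it is quoted as a known consequence of Theorem~\ref{th_Seym} with a pointer to those sources---so there is no alternative argument in the paper to compare against.
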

According to Theorem~\ref{Seymour_matrix}, the building blocks of totally
unimodular matrices are network matrices and their transposes as well as the
matrices $B_1$ and $B_2$ in \eqref{eq_B1}.
\begin{lemma}\label{lem_bb}
$B_1$ and $B_2$ are not SU.
\end{lemma}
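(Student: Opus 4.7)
The plan is to prove non-SU for each $B_i$ by applying the definition directly: exhibit a single $\pm 1$ entry whose replacement by $0$ yields a matrix containing a square submatrix with determinant outside $\{0, \pm 1\}$. A natural first try is the $(1,1)$-entry in each case.

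For $B_1$, I would set $B_1(1,1) = 1$ to $0$. A short cofactor calculation shows $\det(B_1) = 1$ and that the $(1,1)$-minor of $B_1$ vanishes, so the full $5\times 5$ determinant is unchanged by the modification and the TU-failure must come from a proper submatrix. I would then inspect the $4\times 4$ submatrix indexed by rows $\{1,2,3,4\}$ and columns $\{1,3,4,5\}$, namely
\[
\begin{pmatrix}
0 & 0 & 1 & -1 \\
-1 & 0 & 0 & 1 \\
1 & 1 & 0 & 0 \\
0 & -1 & 1 & 0
\end{pmatrix},
\]
and compute its determinant by cofactor expansion along the first row (only two nonzero entries, in columns 3 and 4). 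I expect the two resulting $3\times 3$ cofactors to sum to $-2$, providing the required witness.

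For $B_2$, the analogous move is to zero out $B_2(1,1) = 1$ and examine the leading $4\times 4$ principal submatrix
\[
\begin{pmatrix}
0 & 1 & 1 & 1 \\
1 & 1 & 1 & 0 \\
1 & 0 & 1 & 1 \\
1 & 0 & 0 & 1
\end{pmatrix}.
\]
The row operation $R_3 \leftarrow R_3 - R_4$ turns the third row into $(0,0,1,0)$, and expanding along it reduces the problem to a single $3\times 3$ $\{0,1\}$-determinant, which I expect to evaluate to $-2$.

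The only real obstacle is sign bookkeeping across a handful of $3\times 3$ and $4\times 4$ cofactor expansions; the argument is entirely a finite real-number determinant computation, and invokes no matroid structure of $R_{10}$ beyond the explicit forms of $B_1$ and $B_2$. If one prefers, the cyclic symmetry of $B_1$ shows that any diagonal entry works equally well as the test site, and the transitive action of the automorphism group of $R_{10}$ on its nonzero entries shows the same for $B_2$.
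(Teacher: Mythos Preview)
Your approach is correct and is essentially the same as the paper's: exhibit one entry in each matrix whose replacement by $0$ creates a submatrix with determinant $\pm 2$. I checked your arithmetic and both witnesses work (the $4\times 4$ for $B_1$ evaluates to $-2$, and the row-reduced $4\times 4$ for $B_2$ evaluates to $-2$). The paper merely makes different, slightly more economical choices: it zeroes $B_1(4,3)$ and finds a $3\times 3$ submatrix (rows $3,4,5$, columns $2,3,4$) of determinant $2$, and zeroes $B_2(4,1)$ and finds a $3\times 3$ submatrix (rows $3,4,5$, columns $1,4,5$) of determinant $\pm 2$; no $4\times 4$ computations or row operations are needed. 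Your side remarks about $\det(B_1)$, the $(1,1)$-minor, and the symmetry of $R_{10}$ are accurate but unnecessary for the argument.
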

\begin{proof}
If we make the value of the $(4,3)^{\textrm{th}}$-element of $B_1$ from $-1$ to $0$ then in the matrix so-obtained the
$3\times{3}$ submatrix defined by rows $3,4$ and $5$ and columns $2,3$ and $4$ 
has determinant equal to $+2$. Therefore, $B_1$ is not SU. Similarly, if we make the value of the $(4,1)^{\textrm{th}}$-element of
$B_2$ from $+1$ to $0$ then in the matrix so-obtained, the $3\times{3}$
submatrix defined by rows $3,4$ and $5$ and columns $1,4$ and $5$ has
determinant equal to $-2$ and thus, $B_2$ is not SU.  
\end{proof}
By Theorem~\ref{Seymour_matrix} and Lemma~\ref{lem_bb} we obtain the following result.
\begin{theorem}
Any SU matrix is up to row and column permutations and scaling by $\pm{1}$ factors a network 
matrix, the transpose of a network matrix, or may be constructed recursively by these matrices using  matrix $1$-, $2$- and $3$-sums. 
\end{theorem}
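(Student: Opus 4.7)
The plan is to combine Theorem~\ref{Seymour_matrix} with Lemma~\ref{lem_bb}. Since an SU matrix $M$ is in particular TU, Theorem~\ref{Seymour_matrix} gives that, up to row/column permutations and $\pm 1$ scaling, $M$ is a network matrix, the transpose of a network matrix, equal to $B_1$ or $B_2$, or constructed recursively from these via $1$-, $2$-, and $3$-sums. The first two cases are exactly what the theorem claims, and the case $M\in\{B_1,B_2\}$ is ruled out immediately by Lemma~\ref{lem_bb}.

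What remains is to show that $B_1$ and $B_2$ do not appear as leaves of the recursive construction. I would proceed by induction on the size of $M$. If $M=A\oplus_1 B$, then $A$ and $B$ occur as submatrices of $M$, so Lemma~\ref{lem_ew} makes them SU and the induction hypothesis applies. If $M$ is a $2$-sum as in Definition~\ref{def_k-sums}, the inner blocks $A'$ and $B'$ are already submatrices of $M$, and the extra column $a$ (resp.\ row $b$) of each summand can be recovered, up to a $\pm 1$ scaling, as a column (row) of the rank-one block $ab$ visible inside $M$; so each summand is, up to scaling, a submatrix of $M$, hence SU by Lemma~\ref{lem_ew}, and induction closes this case as well.

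The main obstacle is the $3$-sum, since the two summands in Definition~\ref{def_k-sums} carry bordering rows and columns that are not literally submatrices of $M$. I would handle this by switching to the matroid side: by Theorem~\ref{th_Seym}, a leaf equal to $B_1$ or $B_2$ in any Seymour decomposition tree of $M$ would imply that the matroid $R_{10}$ is a minor of the regular matroid represented by $M$. Since the only TU representations of $R_{10}$ up to permutations and $\pm 1$ scalings are $B_1$ and $B_2$, realizing this minor produces $B_1$ or $B_2$ from $M$ by a sequence of pivotings and submatrix extractions. Together with Lemma~\ref{lem_ew}, the step I expect to need in addition to what is proved in the excerpt is the invariance of SU under pivoting; combined with the above it would yield a contradiction with Lemma~\ref{lem_bb} and complete the proof.
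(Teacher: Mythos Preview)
The paper offers no argument beyond the single sentence ``By Theorem~\ref{Seymour_matrix} and Lemma~\ref{lem_bb} we obtain the following result,'' so your instinct that the recursive case deserves scrutiny is reasonable, and your treatment of $1$- and $2$-sums is correct: in each case the summands are, up to $\pm 1$ scaling of a row or column, genuine submatrices of $M$, hence SU by Lemma~\ref{lem_ew}.

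The gap is in your $3$-sum case. You propose to close it by appealing to ``invariance of SU under pivoting,'' but that invariance is \emph{false}, and the paper itself supplies the counterexample. Lemma~\ref{lem_w3ne} exhibits two TU compact representations $N_1,N_2$ of $\mathcal{W}_3$, one SU and one not, and its proof makes explicit that they are related by a pivot (they are the network matrices of $W_3$ with respect to its two non-isomorphic spanning trees). Consequently an argument that manufactures $B_1$ or $B_2$ from $M$ through pivots and submatrix extraction cannot be combined with Lemma~\ref{lem_ew} to contradict Lemma~\ref{lem_bb}: the pivot step may already destroy strong unimodularity.

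What actually rules out $R_{10}$ as a leaf is its splitter property, recorded in the paper as Theorem~\ref{th_r100}: every regular matroid is built from copies of $R_{10}$ and from $3$-connected minors \emph{without} $R_{10}$ minors using only $1$- and $2$-sums. Hence if $B_1$ or $B_2$ occurs at all in a Seymour decomposition of $M$, it can be arranged to sit at a $1$- or $2$-sum node, where your own argument already shows the corresponding summand is (up to scaling) a submatrix of $M$ and therefore SU---contradicting Lemma~\ref{lem_bb}. This is exactly the mechanism the paper deploys one step later in proving Theorem~\ref{th_lak}; the statement you were asked about is really an informal intermediate remark rather than a theorem with a self-contained proof.
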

The following theorem, known as the splitter theorem for regular matroids, is one of the most important
steps which led to the regular matroid decomposition theorem~\cite{Seymour:1980}.
\begin{theorem}\label{th_r100}
Every regular matroid can be obtained from copies of $R_{10}$ and from
$3$-connected minors without $R_{10}$ minors by a sequence of $1$-sums and $2$-sums.
\end{theorem}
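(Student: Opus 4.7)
The plan is to combine two standard facts: (a) the canonical decomposition of any matroid into its $3$-connected components via $1$-sums and $2$-sums, and (b) the fact that $R_{10}$ is a \emph{splitter} for the class of regular matroids. Neither step requires the full strength of Theorem~\ref{th_Seym}, but using that theorem as a starting point makes the bookkeeping cleaner.

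First, I would recall Cunningham and Edmonds' result that every matroid $M$ admits a tree-decomposition via $1$-sums and $2$-sums whose leaves are $3$-connected matroids (or small degenerate pieces such as circuits and cocircuits, which can themselves be viewed as $3$-connected in the convention used here). Since the classes of regular matroids and of $3$-connected regular matroids are both closed under taking minors, and since $1$- and $2$-sums preserve regularity, each leaf $M_i$ of this decomposition of a regular matroid $M$ is a $3$-connected regular matroid and is a minor of $M$.

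Next, I would invoke the splitter theorem of Seymour in the specific form: \emph{$R_{10}$ is a splitter for the class of regular matroids}, i.e., every $3$-connected regular matroid that has an $R_{10}$-minor is isomorphic to $R_{10}$ itself. Applying this to each leaf $M_i$ of the $\{1,2\}$-sum decomposition, there are exactly two possibilities: either $M_i$ contains no $R_{10}$-minor, or $M_i \cong R_{10}$. Reassembling the leaves by the $1$- and $2$-sums of the decomposition tree then reconstructs $M$ from copies of $R_{10}$ together with $3$-connected regular minors that have no $R_{10}$-minor, which is exactly the statement of Theorem~\ref{th_r100}.

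The main obstacle is of course the splitter property of $R_{10}$, which is the technical heart of the argument; the rest is essentially formal. To establish it one would verify, by an inspection of single-element extensions and coextensions of $R_{10}$, that every proper $3$-connected regular single-element extension (or coextension) of $R_{10}$ fails to exist — equivalently, that $R_{10}$ has no $3$-connected regular one-element extensions within the regular class. Once this local check is in hand, the standard splitter-theorem machinery (Seymour's splitter theorem in its general form for matroid classes closed under minors and $\{1,2\}$-sums) converts it into the global statement used above.
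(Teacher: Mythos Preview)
The paper does not give its own proof of Theorem~\ref{th_r100}; it simply quotes the result as the ``splitter theorem for regular matroids'' and attributes it to Seymour~\cite{Seymour:1980}. So there is no in-paper argument to compare your proposal against.

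That said, your outline is the standard route to this statement and is correct at the level of a sketch: decompose $M$ into its $3$-connected pieces via $1$- and $2$-sums (Cunningham--Edmonds), observe that each piece is a regular minor of $M$, and then apply the fact that $R_{10}$ is a splitter for the class of regular matroids to conclude that any $3$-connected piece containing an $R_{10}$-minor must equal $R_{10}$. One small point worth tightening: in the canonical tree-decomposition the non-$3$-connected pieces are circuits and cocircuits, not ``$3$-connected in the convention used here''; you should either note that these can be further broken down by $2$-sums into triangles and triads (which are $3$-connected and certainly $R_{10}$-free), or adopt explicitly a convention under which small uniform matroids count as $3$-connected. With that caveat, your plan is exactly how the result is proved in the references the paper cites.
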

Combining the above we can now state the main result of this section. 
\begin{theorem} \label{th_lak}
A matrix is SU if and only if it is decomposable via $1$- and $2$-sums into strongly unimodular matrices 
representing $3$-connected regular matroids without $R_{10}$ minors.
\end{theorem}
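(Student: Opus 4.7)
The plan is to prove the biconditional by treating the two directions separately, combining the closure results of Lemmata~\ref{lem_1-s} and~\ref{lem_2-s} with the splitter-type decomposition of Theorem~\ref{th_r100} and the obstruction Lemma~\ref{lem_bb}.

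For the sufficiency direction, I would proceed by induction on the number of $1$- and $2$-sums used in the construction. The base case, in which the matrix is itself one of the given SU blocks, is immediate. For the inductive step, Lemma~\ref{lem_1-s} (respectively Lemma~\ref{lem_2-s}) shows that combining two SU matrices via a $1$-sum (respectively $2$-sum) yields an SU matrix, so the claim follows.

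For the necessity direction, let $A$ be an SU matrix. Since $A$ is TU, it represents a regular matroid $M(A)$. Applying Theorem~\ref{th_r100} to $M(A)$, I would obtain a decomposition into $1$- and $2$-sums whose building blocks are either copies of $R_{10}$ or $3$-connected regular matroids without $R_{10}$ minors. Lifting this matroid-level decomposition to a matrix-level decomposition of $A$ (which is possible because matrix $1$- and $2$-sums realise the corresponding matroid operations on TU representations), I obtain TU matrix blocks representing each piece. These blocks can be read off, up to an attachment row/column, as submatrices of~$A$, so by Lemma~\ref{lem_ew} each block inherits the SU property from~$A$. It remains to rule out $R_{10}$ blocks: if some block represented $R_{10}$, its TU form would coincide with $B_1$ or $B_2$ up to row and column permutations and $\pm 1$ scaling, and Lemma~\ref{lem_bb} shows that neither $B_1$ nor $B_2$ is SU, a contradiction. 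Hence every block is an SU matrix representing a $3$-connected regular matroid without $R_{10}$ minors.

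The main obstacle I anticipate is the passage from the matroid decomposition of $M(A)$ back to an actual matrix decomposition of $A$ whose summands are themselves SU. One must carefully verify that whenever $M(A)$ splits as a matroidal $1$- or $2$-sum, the SU matrix~$A$ admits a compatible matrix-level $1$- or $2$-sum decomposition, and moreover that each piece obtained is related to~$A$ by SU-preserving operations so that Lemma~\ref{lem_ew} (aided by the closure operations of Lemma~\ref{lem_opra}) applies to each summand. Once this correspondence is nailed down, Lemma~\ref{lem_bb} forbids $R_{10}$ blocks and the desired characterization follows.
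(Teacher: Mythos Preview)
Your proposal is correct and follows essentially the same route as the paper: the paper's entire proof is the single sentence ``Combining Lemmata~\ref{lem_1-s}, \ref{lem_2-s} and~\ref{lem_bb} and Theorem~\ref{th_r100}'', and your two directions unpack exactly this combination. Your additional use of Lemma~\ref{lem_ew} (and the closure operations of Lemma~\ref{lem_opra}) to justify that each summand inherits the SU property, together with your flagged concern about lifting the matroid-level $1$-/$2$-sum decomposition to the matrix level, are genuine technical points that the paper leaves implicit; noting that in a $2$-sum $N=\begin{bmatrix}A' & ab\\ 0 & B'\end{bmatrix}$ the block $[A'\;a]$ can be recovered (up to a $\pm 1$ column scaling) as a submatrix of $N$ by selecting a column of $ab$ where $b$ is nonzero resolves this cleanly.
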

\begin{proof}
The ``if part" follows directly from Lemmata~\ref{lem_1-s},~\ref{lem_2-s}. For the ``only if" part, let $A$ be an SU matrix. By definition, $A$ is TU and therefore, by Theorem~\ref{th_r100}, may be obtained from $1$- and $2$-sums from matrices representing $R_{10}$ and $3$-connected matroids without $R_{10}$ minors. By Lemma~\ref{lem_bb}, the two unique representations for  $R_{10}$ are not SU and therefore, $A$ can only be obtained from $3$-connected matrices without $R_{10}$ minor.
\end{proof}
In view of Theorem~\ref{th_lak} we can see that an SU matrix can be decomposed via $1$-sums and $2$-sums into a special class of SU matrices. 
This class will be characterised in the following section.
\section{The Network Structure of the Decomposition Blocks} \label{sec_3c}
By Theorem~\ref{th_lak} we have that SU matrices are decomposable into smaller SU matrices which represent $3$-connected regular matroids without 
$R_{10}$ minors. In this section we shall characterise the structure of these smaller matrices in Theorem~\ref{th_final}.

It is known that any $3$-connected binary matroid contains the wheel matroid $\mathcal{W}_3$ as a minor (Lemma~5.2.10 in~\cite{Truemper:98}), that is the graphic matroid with representation the wheel graph $W_3$ (i.e. the undirected graph obtained from the graphs in Figure~\ref{fig_w3} by omitting the directions).
In the following result we show that there exist two TU representation matrices for $\mathcal{W}_3$, one SU and one non-SU.
\begin{lemma} \label{lem_w3ne}
Up to row and column permutations and scaling by $-1$, the matroid $\mathcal{W}_3$ has two different totally unimodular compact representation matrices, namely
\begin{enumerate}
\item[(i)] an SU representation 
$
N_1=\left[
\begin{array}{rcc} 
 1 & 0 & 1 \\ 
-1 & 1 & 0 \\  
 0 & 1 & 1    
\end{array}
\right]
$, and 
\item[(ii)] a non-SU representation
 $
N_2=\left[
\begin{array}{ccc} 
 1 & 1 & 0 \\ 
 0 & 1 & 1 \\  
 1 & 1 & 1    
\end{array}
\right]
$       
\end{enumerate}  
\end{lemma}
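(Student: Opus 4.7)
The plan is to identify $\mathcal{W}_3$ with the graphic matroid $M(K_4)$ and to parametrize compact TU representations by spanning trees of $K_4$. Any $3\times 3$ TU matrix $A$ for which $[I_3 \; A]$ represents $\mathcal{W}_3$ over $\mathbb{R}$ arises, up to reordering the columns of $I_3$, as the network matrix of $K_4$ with respect to some spanning tree $T$. By Camion's signing theorem for binary matroids, once the GF$(2)$-support of such a representation is fixed, the TU signing is unique up to $\pm 1$ scaling of rows and columns; consequently, once $T$ is fixed, the compact TU representation is determined up to the equivalences permitted in the lemma.

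Next, I would classify spanning trees of $K_4$ under $\operatorname{Aut}(K_4) = S_4$. There are exactly two orbits, namely the path $P_4$ and the star $K_{1,3}$. By Whitney's theorem, matroid automorphisms of $M(K_4)$ are induced by graph automorphisms of $K_4$, so two bases give equivalent compact representations (under row/column permutations and $\pm 1$ scaling) if and only if they lie in the same $S_4$-orbit. Hence at most two equivalence classes arise. A direct computation of the fundamental cycle matrix for a star tree produces a matrix with support pattern equivalent to that of $N_1$ (six nonzeros), while the same computation for a path tree produces one equivalent to $N_2$ (seven nonzeros). Since the number of nonzero entries is an invariant under column/row permutations and under $\pm 1$ scaling, we get $N_1 \not\sim N_2$, and there are exactly two equivalence classes, realized by $N_1$ and $N_2$.

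For the SU verifications: $N_1$ has exactly two nonzeros in every row and every column, so Lemma~\ref{lem_su22} immediately yields that $N_1$ is SU. For $N_2$, replacing the $(3,2)$-entry $1$ by $0$ produces the matrix
\[
\begin{pmatrix} 1 & 1 & 0 \\ 0 & 1 & 1 \\ 1 & 0 & 1 \end{pmatrix},
\]
whose determinant equals $2$. Hence this matrix is not TU, and $N_2$ is not SU.

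The main obstacle is the uniqueness half: justifying that no other compact TU representation is inequivalent to $N_1$ or $N_2$. This rests on Camion's signing theorem (for within-basis uniqueness) and Whitney's theorem (to rule out further equivalences between distinct tree orbits). A more elementary but laborious alternative would be to enumerate all $3 \times 3$ $\{0,\pm 1\}$-matrices whose GF$(2)$-reduction is a compact representation of $M(K_4)$ and to partition them into equivalence classes by hand; this is straightforward but less illuminating than the orbit argument above.
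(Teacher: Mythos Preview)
Your proof is correct and follows essentially the same route as the paper: both reduce compact TU representations of $\mathcal{W}_3 = M(K_4)$ to the two $\operatorname{Aut}(K_4)$-orbits of spanning trees (star versus path) via unique representability/Camion signing, and then verify the SU/non-SU status of the two resulting network matrices. Your appeal to Lemma~\ref{lem_su22} to certify that $N_1$ is SU, and the nonzero-count invariant to separate $N_1$ from $N_2$, are slightly slicker than the paper's direct check, but the core argument is the same.
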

\begin{proof}
Since the graphic matroids are uniquely representable over any field, given a TU  compact representation of $\mathcal{W}_3$
we can obtain any other compact representation by row and column permutations, scaling of rows and columns by $-1$ and pivoting.
Since $\mathcal{W}_3$ is a graphic matroid, each of its TU compact representation matrices is a network matrix as well.  
Pivoting in a network matrix results to a network matrix with respect to another spanning tree of the same graph. 
Specifically, up to graph isomorphism, graph $W_3$ has two different spanning trees which are depicted in Figure~\ref{fig_w3}, where solid edges 
correspond to the tree edges. Thus, up to row and column permutations and scaling by $-1$, there are two different network matrices 
representing $\mathcal{W}_3$; namely:
$
N_1=\left[
\begin{array}{rcc} 
 1 & 0 & 1 \\ 
-1 & 1 & 0 \\  
 0 & 1 & 1    
\end{array}
\right]
$, and 
$
N_2=\left[
\begin{array}{ccc} 
 1 & 1 & 0 \\ 
 0 & 1 & 1 \\  
 1 & 1 & 1    
\end{array}
\right]
$. It is now easy to see that if we replace any nonzero of     
$
\left[
\begin{array}{rcc} 
 1 & 0 & 1 \\ 
-1 & 1 & 0 \\  
 0 & 1 & 1    
\end{array}
\right]
$ by a $0$ then all the matrices so-obtained are TU. On the other hand, if we replace the nonzero at third row and  second column of 
 $
\left[
\begin{array}{ccc} 
 1 & 1 & 0 \\ 
 0 & 1 & 1 \\  
 1 & 1 & 1    
\end{array}
\right]
$ by $0$ then the matrix so-obtained is not TU.

\end{proof}
\begin{figure}[h] 
\begin{center}
\centering
\psfrag{(1)}{\footnotesize $(1)$}
\psfrag{(2)}{\footnotesize $(2)$}
\includegraphics*[scale=0.5]{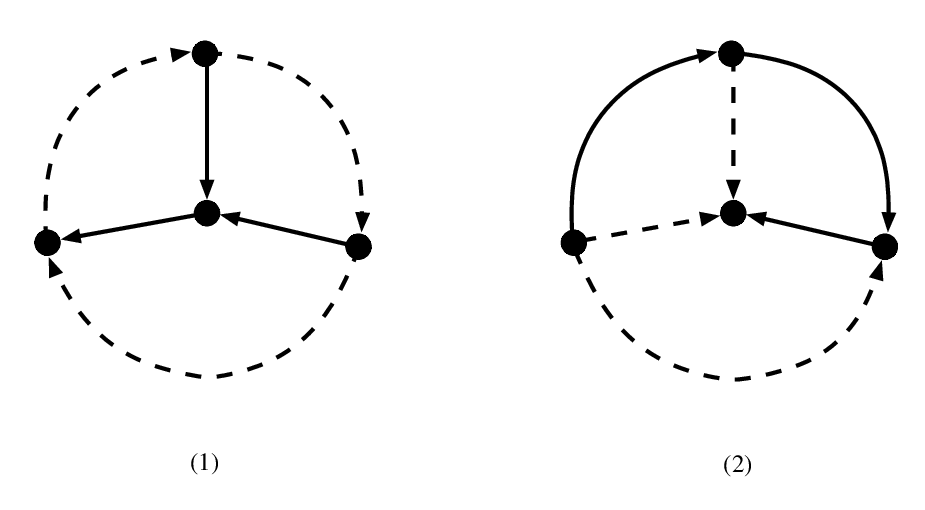}
\end{center}
\caption{The two possible network representations of graph $W_3$ where the network matrix associated with (1) 
is SU while in (2) it is non-SU.}
\label{fig_w3}
\end{figure}

We shall now prove the following important theorem which shows that SU representation matrices of $3$-connected regular matroids 
can not have certain $2\times{2}$ matrices as submatrices.
\begin{theorem} \label{th_tr23}
If $N$ is an $m\times{n}$ representation matrix $(m,n\geq{3})$ of a $3$-connected regular matroid
containing, up to row and column permutations and scalings by $-1$, the  submatrix
$
\left[
\begin{array}{rr} 
 1 &  1  \\
 1 &  1  
\end{array}
\right]
$, 
then $N$ is not SU.
\end{theorem}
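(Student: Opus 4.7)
The plan is to argue by contradiction: assume $N$ is SU, so that by Lemma~\ref{lem_ew} every submatrix of $N$ is SU. After applying allowed row/column permutations and $\pm 1$-scalings, which preserve both the SU property and the matroid $M = M[I_m \mid N]$, I may assume the all-ones $2\times 2$ block sits literally at rows $r_1, r_2$ and columns $c_1, c_2$ of $N$.

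I first use 3-connectedness to locate a ``differing'' row and column. Since $M$ is 3-connected on $m+n\geq 6$ elements, it admits no parallel elements (which would arise from two identical columns of $N$) and no coloops (two identical rows of $N$ would produce a coloop after one row operation). Hence rows $r_1, r_2$ differ in some column $c_3 \neq c_1, c_2$, and columns $c_1, c_2$ differ in some row $r_3 \neq r_1, r_2$.

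I then examine the $3\times 3$ submatrix $A$ of $N$ at rows $\{r_1, r_2, r_3\}$ and columns $\{c_1, c_2, c_3\}$. Total unimodularity on the $2\times 2$ minors involving the block, combined with $|N_{i,j}| \leq 1$, forces the pairs $(N_{r_1,c_3}, N_{r_2,c_3})$ and $(N_{r_3,c_1}, N_{r_3,c_2})$ each to consist of exactly one zero and one $\pm 1$. After swapping $r_1 \leftrightarrow r_2$ and $c_1 \leftrightarrow c_2$ if necessary, $A$ takes the canonical form
\[
A = \begin{bmatrix} 1 & 1 & 0 \\ 1 & 1 & \epsilon_y \\ 0 & \epsilon_b & z \end{bmatrix}, \qquad \epsilon_y, \epsilon_b \in \{\pm 1\},
\]
and TU further restricts $z \in \{0,\; \epsilon_y \epsilon_b\}$. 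If $z = \epsilon_y \epsilon_b$, then scaling row $r_3$ by $\epsilon_b$ and column $c_3$ by $\epsilon_y$ transforms $A$ into the matrix $N_2$ of Lemma~\ref{lem_w3ne}, which is not SU; Lemma~\ref{lem_ew} would then contradict the SU of $N$. Hence $z = 0$.

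Applying this conclusion to every valid choice of $(r_3, c_3)$ yields $N_{R_B, C_B} = 0$, where $R_B$ is the set of rows in which $c_1, c_2$ differ and $C_B$ is the set of columns in which $r_1, r_2$ differ (both nonempty by 3-connectedness). I finish the proof by showing that this zero block, together with the unit-like pattern of each $R_B$-row in columns $c_1, c_2$ (and the symmetric pattern for $C_B$-columns in rows $r_1, r_2$), induces a $2$-separation of $M$, contradicting 3-connectedness. The main obstacle is identifying the right partition for the rank computation: a natural candidate is $X = \{e_r : r \in R_B\} \cup \{f_{c_1}, f_{c_2}\}$, where the observation that $f_{c_1}$ and $f_{c_2}$ agree outside $R_B$ forces $\mathrm{rank}(X) \leq |R_B| + 1$, and combining this with further SU-forced zero blocks (obtained by applying the $3\times 3$ analysis above to other all-ones $2 \times 2$ subblocks of $N$) should yield the desired contradiction.
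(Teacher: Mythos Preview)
Your argument is sound through the derivation of $N_{R_B,C_B}=0$: the $3\times 3$ analysis is correct, and the dichotomy ``either $N_2$ appears or the corner entry vanishes'' is exactly the right local observation. The gap is the final step. With only $N_{R_B,C_B}=0$ in hand, the candidate set $X=\{e_r:r\in R_B\}\cup\{f_{c_1},f_{c_2}\}$ does satisfy $r(X)\le |R_B|+1$, but on the complementary side you still have all columns in $C_A\setminus\{c_1,c_2\}$, and nothing you have proved controls their entries in the rows $R_B$. Hence $r(E\setminus X)$ can be as large as $m$, and no $2$-separation follows. Your proposed fix---iterate the $3\times 3$ argument on other all-$(\pm 1)$ $2\times 2$ blocks---does not obviously close this: a column $c\in C_A\setminus\{c_1,c_2\}$ with $N_{r_1,c}=N_{r_2,c}=0$ produces no new block to work with, yet such a column may still have arbitrary $\pm 1$ entries in $R_B$. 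The phrase ``should yield the desired contradiction'' is accurate self-assessment: as written this is a plan, not a proof.

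The paper avoids the $2$-separation route entirely. It first enlarges the $2\times 2$ block to a \emph{maximal} all-ones submatrix $D$ with row set $R$ and column set $S$, then invokes $3$-connectedness (via the bipartite graph $BG(N)$ with the $D$-edges deleted) to obtain a shortest odd path from some $r\in R$ to some $s\in S$. If that path has length~$3$ one reads off $N_2$ directly; if it is longer, the path together with two rows of $R$ and two columns of $S$ produces an explicit square submatrix $M$ whose determinant, expanded along its first row, is a sum of three $\pm 1$ triangular determinants---so setting the right entry of that row to $0$ leaves determinant $\pm 2$. This gives a concrete non-SU submatrix rather than a separation, and sidesteps the bookkeeping your iteration would require.
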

\begin{proof}
Since $N$ is the representation matrix of a connected matroid we have that
it has an $M(W_2)$ minor (see Lemma~5.2.10 in \cite{Truemper:98}), where $W_2$ is the wheel graph with two spokes. Furthermore,
the matrix 
$
\left[
\begin{array}{rr} 
 1 &  1  \\
 1 &  1  
\end{array}
\right]
$ under any row and column permutations and scalings by $-1$
factors  displays $M(W_2)$. Enlarge this $2\times{2}$ submatrix
to a maximal submatrix containing only $1$s. Let us call $D$ that submatrix and index its
rows and columns by $R$ and $S$, respectively. Furthermore, in
the partitioned $N$, as it is depicted in \eqref{eq_1} below,  each row of the submatrix $U$ and each column of
the submatrix $V$ is assumed to be nonzero. From our assumption that $D$ is
maximal we have that each row and each column of $U$ and $V$, respectively, must have at least one
zero element.
\begin{equation} \label{eq_1}
N  = 
\kbordermatrix{ 
       &  S         &            & Q                     &              &                  \\
R     &  D        & \vrule  & V                     & \vrule   & 0                \\ \cline{2-6}
P     &  U        & \vrule  &  \{0,\pm{1}\}  & \vrule   & \{0,\pm{1}\}   \\ \cline{2-6}
       &  0         & \vrule  &  \{0,\pm{1}\}  & \vrule   & \{0,\pm{1}\}   
} 
\end{equation}    
Let $BG(N)$ be the bipartite graph of $N$ and let $F$ be its
subgraph obtained from the deletion of the edges corresponding to the $1$s of
$D$. Since $N$ is the representation matrix of
a $3$-connected regular matroid,  we have that there must exist a path in $F$
connecting a vertex of $R$ with a vertex of $S$ (see Lemma~5.2.11 in~\cite{Truemper:98}) which, due to the
bipartiteness of $F$, it has to be of odd
length. If we assume that the length of that path is $3$ then
the matrix $N_2$ of Lemma~\ref{lem_w3ne} is a submatrix of $N$, 
which implies that $N$ is not SU. 

If the shortest path connecting a vertex of $R$  with a vertex of $S$ has
length greater than $3$ then we
will show that the matrix $N$ is also non-SU. Let's say
that the shortest path lies between the vertices $r_2$ and $s_2$ of $R$ and
$S$, respectively. Then $N$ will have the following
submatrix $M$:
\begin{equation*} 
M=
\kbordermatrix{\mbox{}& q_1 & q_2 & & \ldots & & q_n & s_2 & s_1\\ 
r_2          & \pm{1} & 0           & 0 &             & 0 &  0  &  \pm{1} & \pm{1}\\   
p_n         & \pm{1} & \pm{1}  & 0 & \ldots  & 0 &  0  &   0& 0 \\ 
p_{n-1}    & 0           & \pm{1}  & \pm{1}  &   &0 &0 &0 &0\\  
\vdots    &  &   \vdots     &   & \ddots &   & &\vdots &  \\
p_1         & 0        &    0      & 0    &  & 0 & \pm{1} & \pm{1} & 0\\ 
r_1          & 0  & 0  &  0 &\ldots & 0 & 0 & \pm{1} & \pm{1} 
}
\end{equation*} 
where $\{r_1,r_2\}\in{R}$, $\{s_1,s_2\}\in{S}$, $\{p_1,\ldots,p_n\}\in{P}$ and $\{q_1,\ldots,q_n\}\in{Q}$.
Moreover, we  have that  $M$ will have  no zeros in the main diagonal and in the diagonal below the
main  because of the path existing between $r_2$ and $s_2$. The submatrix of $M$ having rows indexed by $r_1$ and $r_2$ and columns indexed by
$s_1$ and $s_2$ is full of ones because it is submatrix of $D$. Furthermore,
we have zeros in the position indexed by $r_1$ and $q_1$ and in the position
indexed by $p_1$ and $s_1$ because we can assume that there exists at least one
vertex of $R$ not being adjacent to $q_1$, which we call $r_1$, and similarly we
can assume that there exists a vertex of $S$ not being adjacent to $p_1$, which
we call $s_1$. All the other zeros in $M$ are due to the fact that the path
between $r_2$ and $s_2$ is the shortest between a vertex of $R$ and a vertex of
$S$ in the graph $F$.

We shall now show that matrix $M$ is not SU. If we
expand the determinant of $M$ along the first row then this determinant is  equal
to the sum of the determinants of three TU matrices being triangular with no
zero in the diagonal. Therefore, it is easy now to see that there exists a nonzero in the
first row of $M$ such that if we replace it by a zero and expand the determinant
of the matrix so-obtained along the first row then we have that the determinant of this
matrix will be $2$ or $-2$. Therefore, $N$ has a submatrix $M$ being non-SU
and by Lemma~\ref{lem_ew}, $N$ is not $SU$.                 
\end{proof}
Crama et al. in~\cite{CraLoPo:92}  proved that if $A$ is an SU matrix then we can partition its  rows as stated in the following theorem.
\begin{theorem} \label{th_cr11}
If $A$ is an SU matrix, then there exists a partition $(S_1,\ldots,S_k)$ of the rows of $A$ with the following properties:
\begin{itemize}
\item[(i)] every column of $A$ has $0, 1$ or $2$ nonzero entries in each $S_i$, for $i=1,\ldots,k$;
\item[(ii)] if a column has exactly one nonzero entry in some $S_i$, then all its entries in $S_{i+1},\ldots,S_k$ are zeros.
\end{itemize}
\end{theorem}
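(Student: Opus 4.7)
The plan is to induct on the number of rows $m$ of $A$. The base case $m\leq 2$ is immediate: take $k=1$ and let $S_1$ be the entire row set, since every column then contains at most two nonzeros automatically. For the inductive step, the goal is to locate a nonempty subset $T\subsetneq R$ of the rows of $A$ such that
\begin{itemize}
\item[(a)] every column of $A$ has at most two nonzeros in $T$, and
\item[(b)] every column with exactly one nonzero in $T$ is zero on $R\setminus T$.
\end{itemize}
Once such a $T$ is in hand, set $S_1 := T$ and apply the induction hypothesis to the submatrix $A[R\setminus T]$, which is SU by Lemma~\ref{lem_ew}, to obtain an ordered partition $(S_2,\ldots,S_k)$ of its rows; the concatenation $(S_1,\ldots,S_k)$ is the desired partition. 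A short verification confirms that conditions (i) and (ii) of the statement transfer along the concatenation: (i) for $i=1$ is precisely (a) and (ii) for $i=1$ is (b), while the remaining indices inherit both conditions from the inductive partition of $A[R\setminus T]$.

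The crux is the construction of $T$, and my plan is to exploit the SU property directly rather than mere total unimodularity. Begin with $T$ consisting of one row carrying a nonzero in some column $j^{*}$ with the largest number of nonzeros in $A$, and then iteratively grow $T$ by closing off any column whose intersection with $T$ currently has odd size: either adjoin another nonzero row of that column (bringing its count in $T$ up to two) or verify that the column is already zero on the rows outside $T$. The defining property of SU matrices — that zeroing any single $\pm 1$ entry preserves total unimodularity — pins down the nonzero pattern rigidly enough to prevent any column from accumulating a third nonzero inside $T$ without first being exhausted on $R\setminus T$.

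The main obstacle is establishing that this closure process terminates with $T$ properly contained in $R$ while satisfying (a) and (b) simultaneously for every column. A priori, closing off one column might pull in a row that opens another, cascading until $T$ swallows all of $R$; ruling out this pathological behaviour is the technical heart of the argument. The intended ingredient is a Ghouila--Houri--style balancing argument applied to the supports of the columns of $A$ with three or more nonzeros, combined with the SU-specific freedom to zero a $\pm 1$ entry, which should force the closure to terminate early. Lemma~\ref{lem_su22} frames the induction: each resulting block restricts $A$ to a matrix with at most two nonzeros per column, which is automatically SU, so the partition has the right ``local'' flavour at every level of the recursion.
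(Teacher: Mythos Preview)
The paper does not prove this theorem; it is quoted from Crama, Loebl and Poljak~\cite{CraLoPo:92} without argument, so there is no in-paper proof to compare your attempt against.

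Judged on its own, your inductive shell is fine: once a set $T$ with properties (a) and (b) is in hand, peeling it off as $S_1$ and recursing on the SU submatrix $A[R\setminus T]$ does yield the required partition. The substance, however, is entirely in the existence of $T$, and here the proposal is not a proof but a hope. You describe a greedy closure, explicitly concede that it may cascade until $T=R$, and then write that a ``Ghouila--Houri--style balancing argument'' together with the SU zeroing freedom ``should force the closure to terminate early''. Neither ingredient is actually deployed: Ghouila--Houri's criterion characterises total unimodularity, not strong unimodularity, and produces a $\pm 1$ colouring of a given row set rather than a selection of rows; and you give no mechanism by which zeroing a single entry constrains the global growth of $T$. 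You also never argue that the closure cannot drive some column to three nonzeros inside $T$; the phrase ``pins down the nonzero pattern rigidly enough'' is an assertion, not an argument. A secondary gap: you require $T\subsetneq R$, yet when every column of $A$ already has at most two nonzeros no proper $T$ satisfying (a) and (b) need exist (take $A$ to be the vertex--edge incidence matrix of a cycle), so that case must be disposed of directly with $k=1$ before the inductive step is invoked.
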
 

Since by (i) of Lemma~\ref{lem_opra}, SU matrices are closed under taking the transpose we can restate Theorem~\ref{th_cr11}
for the columns of an SU matrix. Consider an SU matrix $A'$ and let
$\mathcal{S}=(S_1, S_2,\ldots,S_k)$ be the partition of its rows as
determined by Theorem~\ref{th_cr11} and
$\mathcal{T}=(T_1, T_2,\ldots,T_l)$ be the partition of the rows of the transpose of $A'$ 
as determined by Theorem~\ref{th_cr11}. Then by permuting rows and
columns of $A'$ we can obtain the following SU matrix $A$:  
\begin{equation} \label{eq_3}
A  = 
\kbordermatrix{ 
           &  T_1       &    T_2                &  \cdots    &  T_l              \\
S_1      &  A_{1,1}  & A_{1,2}            &   \cdots    & A_{1,l}            \\ 
S_2      &  A_{2,1}  & A_{2,2}             &   \cdots    & A_{2,l}            \\ 
\vdots & \vdots & \vdots            & \ddots     & \vdots         \\ 
S_k      & A_{k,1}  & A_{k,2}            &   \cdots    & A_{k,l}            \\ 
} 
\end{equation}    
where we have that each $A_{i,j}$ is the submatrix
of $A'$ defined by the rows of $S_i$ and columns of $T_j$.  We are now ready to state the main result of this section.

\begin{theorem}\label{th_final}
Let $A$ be an SU matrix representation of a $3$-connected regular  matroid  being in
the form of (\ref{eq_3}). Then the following hold:
\begin{itemize}
\item[(i)] $A_{1,1}$ has $0$ or $2$ non-zeros in each column and row 
\item[(ii)] each column of $A_{1,j}$ has $0$ or $2$ non-zeros and each row of
  $A_{i,1}$ has $0$ or $2$ nonzero elements
\item[(iii)] if an  $A_{i,j}$ has $2$ non-zeros in each column and each
 row then, up to row and column permutations,    
\[
A_{i,j}=
\left[
\begin{array}{cccccc} 
\pm{1}         &       &           &          &       & \pm{1}      \\
\pm{1}         & \pm{1}   &           &          &       &           \\ 
            & \pm{1}  & \pm{1}   &          &       &            \\  
            &       &           & \ddots &       &             \\  
            &       &           &          & \pm{1}   &            \\ 
            &       &           &          & \pm{1}   & \pm{1}             
\end{array}
\right]
\]
\end{itemize}
\end{theorem}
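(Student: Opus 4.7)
The plan is to dispatch parts (i) and (ii) by a direct application of Crama et al.'s partition theorem (Theorem~\ref{th_cr11}), together with the fact that a compact representation of a 3-connected regular matroid (on at least four elements) admits no unit row or unit column, and to derive part (iii) from a bipartite-graph analysis of a single block $A_{i,j}$ using Theorem~\ref{th_tr23}.

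For (i) and (ii), consider a column of $A_{1,j}$, which is the restriction to $S_1$ of some column of $A$. By Theorem~\ref{th_cr11}(i) this restriction has at most two nonzero entries; if it had exactly one, then property (ii) of the same theorem would force the remaining entries of this column in $S_2,\ldots,S_k$ to vanish, so the column of $A$ would be a unit column. Since $[I\mid A]$ represents a 3-connected matroid, a unit column produces a two-element parallel class and is impossible. Hence every column of $A_{1,j}$ carries $0$ or $2$ nonzeros. Applying the same argument to $A^T$, which is SU by Lemma~\ref{lem_opra}(i) and in which unit rows would correspond to two-element cocircuits, yields the analogous statement for rows of $A_{i,1}$; both statements simultaneously cover $A_{1,1}$ and give (i).

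For (iii), fix a block $A_{i,j}$ with exactly two nonzeros in every row and column, and let $G$ be the bipartite graph with vertex classes given by the rows of $S_i$ and columns of $T_j$ in this block, with an edge for each nonzero entry. Since every vertex of $G$ has degree two, $G$ is a vertex-disjoint union of even cycles, so after row and column permutations $A_{i,j}$ is block-diagonal with each block of the displayed cyclic form. A $4$-cycle in $G$ would be a $2\times 2$ all-nonzero submatrix of $A$, which up to $\pm 1$ row and column scalings is exactly the configuration forbidden by Theorem~\ref{th_tr23}; hence every cycle in $G$ has length at least six. To conclude (iii) I would show that $G$ is a single cycle: a block-diagonal splitting of $A_{i,j}$ into two pieces induces a partition of $S_i$ and $T_j$ which, combined with the structural constraints that Theorem~\ref{th_cr11} imposes on the surrounding blocks $A_{i',j'}$, should yield a 2-separation of $M(A)$, contradicting 3-connectedness. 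Reading off the $\pm 1$ entries along the cyclic ordering of the vertices of $G$ then produces the displayed matrix.

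The main obstacle is this last step. Excluding $4$-cycles via Theorem~\ref{th_tr23} is routine, and 2-regularity of $G$ immediately forces a union of cycles, but turning a putative block-diagonal splitting of $A_{i,j}$ into a genuine matroid 2-separation requires carefully tracking how the row-classes of $S_i$ and column-classes of $T_j$ interact with the remaining blocks of $A$ through the bipartite graph $BG(A)$. A path-based argument in $BG(A)$, analogous to the shortest-path construction used in the proof of Theorem~\ref{th_tr23}, is the natural tool for extracting a non-SU $\pm 1$ submatrix from a hypothetical two-component $G$.
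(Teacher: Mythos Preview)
Your treatment of (i) and (ii) is exactly the paper's: a single nonzero in a column of $A_{1,j}$ (resp.\ row of $A_{i,1}$) forces, via Theorem~\ref{th_cr11}(ii), a unit column (resp.\ row) of $A$, contradicting $3$-connectedness.

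For (iii) the paper is far terser than you are. Its entire argument is: Theorem~\ref{th_tr23} forbids a $2\times 2$ all-nonzero submatrix in $A$, hence in $A_{i,j}$, and ``it is now straightforward'' that $A_{i,j}$ has the displayed form. Your bipartite-graph reading (a $2$-regular bipartite graph is a disjoint union of even cycles; $4$-cycles are exactly the forbidden $2\times 2$ blocks) is the natural way to make that ``straightforward'' step explicit, and it matches the paper's intent.

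Where you diverge is in insisting that $G$ be a \emph{single} cycle and proposing to extract a $2$-separation from a block-diagonal splitting of $A_{i,j}$. The paper makes no such argument: after excluding the $2\times 2$ block it simply asserts the form. So either the displayed matrix is meant to be read as ``block-diagonal with blocks of this cyclic shape'' (in which case your extra step is unnecessary and the paper's ``straightforward'' is justified), or the paper's own proof has the same gap you identify. Either way, your proposed route through a $2$-separation of $M(A)$ is not what the paper does, and there is no evident reason why a disconnected $A_{i,j}$ for general $i,j$ should force a $2$-separation of the full matroid; the interaction with the surrounding blocks need not cooperate. I would not invest effort in that direction: the combinatorics of the $2$-regular bipartite graph plus Theorem~\ref{th_tr23} already deliver everything the paper actually proves.
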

\begin{proof}
For (i) and (ii), by way of contradiction, it is enough to observe that if there was a column (row)
with exactly one nonzero, then by Theorem~\ref{th_cr11} this column (row) would be a unit column (row). This
would mean that the matroid represented by $A$ has a $2$-separation (see e.g. Lemma~3.3.20 in~\cite{Truemper:98}), which contradicts our hypothesis that this matroid is $3$-connected.


For (iii), from  Theorem~\ref{th_tr23} we have that $A_{i,j}$ can not have the
matrix 
$
\left[
\begin{array}{rr} 
 1 &  1  \\
 1 &  1  
\end{array}
\right]
$ as submatrix. It is now straightforward to see that $A_{i,j}$ has the form
described in (iii).
\end{proof}

\section{Conclusion}
A new decomposition theory for SU matrices with blocks being matrices representing simple networks has been proposed. Specifically, whenever an SU matrix is not network then it has a clear network structure since each block in the aforementioned decomposition of $A$ is the node-incident matrix of a directed graph. Moreover, decomposition matroidal results were related to SU matrices with the prospect to be utilized in real-life important problems. Such a field is that of complex networks modelling numerous real-life problems (see e.g.~\cite{EasKle:10,Jack:10,New:2010}). Most importantly, we strongly believe that this decomposition may be used for the development of a recognition algorithm for SU matrices which will not depend on the total-unimodularity test and would be a much more efficient, since it will utilize the network structure of the blocks of SU matrices.

\bibliographystyle{plain}

\begin{thebibliography}{10}

\bibitem{Cora:87}
M. Conforti and M.R. Rao.
\newblock Structural properties and recognition of restricted and strongly unimodular matrices.
\newblock {\em Mathematical Programming}, 38:17--27, 1987.

\bibitem{CraHaIb:86}
Y. Crama, P.L. Hammer and T. Ibaraki.
\newblock Strong unimodularity for matrices and hypergraphs.
\newblock {\em Discrete Applied Mathematics}, 15:221--239, 1986.

\bibitem{CraLoPo:92}
Y. Crama, M. Loebl and S. Poljac.
\newblock A decomposition of strongly unimodular matrices into incidence matrices of digraphs.
\newblock {\em Discrete Mathematics}, 102:143--147, 1992.

\bibitem{Diestel:05}
R.~Diestel.
\newblock {\em Graph Theory}.
\newblock Springer, 2005.

\bibitem{EasKle:10}
D. Easley and J. Kleinberg.
\newblock {\em Networks, Crowds and markets: Reasoning for a highly connected world}.
\newblock Cambridge University Press, 2010.

\bibitem{Jack:10}
M.O.~Jackson
\newblock {\em Social and Economic Networks}.
\newblock Princeton University Press, 2010.

\bibitem{LoPo:89}
M. Loebl and S. Poljac.
\newblock A hierarchy of totally unimodular matrices.
\newblock {\em Discrete Mathematics}, 76:241--246, 1989.

\bibitem{NemWols:1988}
G.L. Nemhauser and L.A. Wolsey.
\newblock {\em Integer and Combinatorial Optimization}.
\newblock Wiley, 1988.

\bibitem{New:2010}
M.J. Newman.
\newblock {\em Networks: An Introduction}.
\newblock Oxford University Press, 2010.

\bibitem{Oxley:06}
J. Oxley.
\newblock {\em Matroid Theory}.
\newblock Oxford University Press, 2006.

\bibitem{PiPa:09}
L. Pitsoulis, K. Papalamprou, G. Appa, B. Kotnyek.
\newblock On the representability of totally unimodular matrices on bidirected graphs.
\newblock {Discrete Mathematics}, 309: 5024-5042, 2009.

\bibitem{Schrijver:86}
A. Schrijver.
\newblock {\em Theory of Linear and Integer Programming}.
\newblock Wiley, 1986.


\bibitem{Seymour:1980}
P.D. Seymour.
\newblock Decomposition of regular matroids.
\newblock {\em Journal of Combinatorial Theory Series B}, 28:305--359, 1980.


\bibitem{Seymour:95}
P.D. Seymour.
\newblock Matroid Minors.
\newblock {In \em Handbook of Combinatorics, Volume I}, 527--550, Elsevier, 1980.


\bibitem{Steen:10}
M. van Steen
\newblock {\em Graph Theory and Complex Networks: An Introduction}.
\newblock Maarten van Steen, 2010.



\bibitem{Truemper:98}
K. Truemper.
\newblock {\em Matroid Decomposition}.
\newblock Leibniz, 1998.


\end{thebibliography}

\end{document}